\newtheorem{theorem}{Theorem}
\theoremstyle{plain}
\newtheorem{lemma}{Lemma}
\newtheorem{remark}{Remark}
\numberwithin{equation}{section}
\begin{document}
\title[]{NEW INEQUALITIES OF HERM\.{I}TE-HADAMARD TYPE FOR FUNCTIONS WHOSE
DERIVATIVES ABSOLUTE VALUES ARE QUASI-CONVEX}
\author{\c{C}ET\.{I}N YILDIZ$^{\blacksquare ,\blacklozenge }$}
\address{$^{\blacksquare }$ATAT\"{U}RK UNIVERSITY, K.K. EDUCATION FACULTY,
DEPARTMENT OF MATHEMATICS, 25240, CAMPUS, ERZURUM, TURKEY}
\email{yildizcetiin@yahoo.com}
\author{AHMET OCAK AKDEM\.{I}R$^{\bigstar }$}
\email{merveavci@ymail.com}
\address{$^{\bigstar }$GRADUATE SCHOOL OF NATURAL AND APPLIED SCIENCES, A%
\u{G}RI \.{I}BRAH\.{I}M \c{C}E\c{C}EN UNIVERSITY, A\u{G}RI, TURKEY}
\email{ahmetakdemir@a\u{g}ri.edu.tr}
\thanks{$^{\blacklozenge }$corresponding author.}
\author{MERVE AVCI$^{\blacksquare }$}
\subjclass[2000]{Mathematics Subject Classification. 26A51, 26D10.}
\keywords{quasi-convex functions, h\"{o}lder inequality, power mean
inequality}

\begin{abstract}
In this paper we establish some estimates of the right hand side of a
Hermite-Hadamard type inequality in which some quasi-convex functions are
involved.
\end{abstract}

\maketitle

\section{INTRODUCTION}

Let $f:I\subset 
%TCIMACRO{\U{211d} }%
%BeginExpansion
\mathbb{R}
%EndExpansion
\rightarrow 
%TCIMACRO{\U{211d} }%
%BeginExpansion
\mathbb{R}
%EndExpansion
$ be a convex function defined on the interval $I$ of real numbers and a,b$%
\in I$, with $a<b$. The following inequality, known as the \textit{%
Hermite-Hadamard} inequality for convex functions, holds:%
\begin{equation*}
f(\frac{a+b}{2})\leq \frac{1}{b-a}\int_{a}^{b}f(x)dx\leq \frac{f(a)+f(b)}{2}.
\end{equation*}

In recent years many authors have established several inequalities connected
to \textit{Hermite-Hadamard's }inequality. For recent results, refinements,
counterparts, generalizations and new\textit{\ Hermite-Hadamard }type
inequalities see \cite{ssD},\cite{usK} and \cite{gsY}.

We recall that the notion of quasi-convex functions generalizes the notion
of convex functions. More precisely, a function $f:[a,b]\rightarrow 
%TCIMACRO{\U{211d} }%
%BeginExpansion
\mathbb{R}
%EndExpansion
$ is said to be quasi-convex on $[a,b]$ if%
\begin{equation*}
f(\lambda x+(1-\lambda )y)\leq \max \{f(x),f(y)\},
\end{equation*}%
for any $x,y\in \lbrack a,b]$ and $\lambda \in \lbrack 0,1]$. Clearly, any
convex function is a quasi-convex function. Furthermore, there exist
quasi-convex functions which are not convex (see\cite{daI}).

Recently, D.A. Ion \cite{daI} established two inequalities for functions
whose first derivatives in absolute value are quasi-convex. Namely, he
obtained the following results:

\begin{theorem}
Let $f:I^{o}\subset 
%TCIMACRO{\U{211d} }%
%BeginExpansion
\mathbb{R}
%EndExpansion
\rightarrow 
%TCIMACRO{\U{211d} }%
%BeginExpansion
\mathbb{R}
%EndExpansion
$ be a differentiable mapping on $I^{o}$, $a,b\in I^{o}$ with $a<b.$ If $%
\left\vert f^{\prime }\right\vert $ is quasi-convex on $[a,b]$, then the
following inequality holds:%
\begin{equation*}
\left\vert \frac{f(a)+f(b)}{2}-\frac{1}{b-a}\int_{a}^{b}f(u)du\right\vert
\leq \frac{b-a}{4}\left\{ \max \left\vert f^{\prime }(a)\right\vert
,\left\vert f^{\prime }(b)\right\vert \right\} .
\end{equation*}
\end{theorem}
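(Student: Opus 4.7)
The plan is to reduce the theorem to a single-variable integral identity and then control the integrand pointwise using the quasi-convexity hypothesis. The well-known integration-by-parts identity underlying trapezoid-type Hermite--Hadamard estimates states that, for any differentiable $f$ on $[a,b]$,
\begin{equation*}
\frac{f(a)+f(b)}{2} - \frac{1}{b-a}\int_{a}^{b} f(u)\,du = \frac{b-a}{2}\int_{0}^{1}(1-2t)\, f'(ta+(1-t)b)\,dt.
\end{equation*}
I would first establish this identity (or cite it) by integrating $(1-2t)f'(ta+(1-t)b)$ by parts with respect to $t$ on $[0,1]$; the boundary terms produce $\tfrac{f(a)+f(b)}{2}$ after the change of variable $u=ta+(1-t)b$, and the remaining integral yields the mean value of $f$ over $[a,b]$.

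Next, I would take absolute values on both sides and move them inside the integral, obtaining
\begin{equation*}
\left| \frac{f(a)+f(b)}{2} - \frac{1}{b-a}\int_{a}^{b} f(u)\,du \right| \le \frac{b-a}{2}\int_{0}^{1} |1-2t|\,\bigl|f'(ta+(1-t)b)\bigr|\,dt.
\end{equation*}
Here the quasi-convexity assumption on $|f'|$ comes in: since $ta+(1-t)b$ is a convex combination of $a$ and $b$, the definition of quasi-convexity gives
\begin{equation*}
\bigl|f'(ta+(1-t)b)\bigr| \le \max\bigl\{|f'(a)|,|f'(b)|\bigr\}
\end{equation*}
for every $t\in[0,1]$. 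Pulling this uniform bound out of the integral reduces the problem to computing $\int_{0}^{1}|1-2t|\,dt$.

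The remaining step is the elementary calculation $\int_{0}^{1}|1-2t|\,dt = \tfrac{1}{2}$, split at $t=\tfrac12$, which combined with the prefactor $\tfrac{b-a}{2}$ gives exactly the claimed constant $\tfrac{b-a}{4}$. I do not anticipate any serious obstacle: the only non-routine point is verifying (or recalling) the initial integral identity, and after that the argument is a one-line application of the quasi-convexity inequality together with a trivial integral evaluation.
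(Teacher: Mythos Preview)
Your argument is correct: the integration-by-parts identity with kernel $(1-2t)$ is standard, the quasi-convexity bound $|f'(ta+(1-t)b)|\le\max\{|f'(a)|,|f'(b)|\}$ applies pointwise for all $t\in[0,1]$, and $\int_0^1 |1-2t|\,dt = \tfrac{1}{2}$ produces exactly the constant $\tfrac{b-a}{4}$.

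Note, however, that the paper does not supply its own proof of this statement: Theorem~1 is quoted from Ion \cite{daI} as background. The paper's own device is Lemma~1, which for a free node $x\in[a,b]$ writes the trapezoid-type difference as a sum of \emph{two} integrals with kernel $(1-t)$, one over $[a,x]$ and one over $[x,b]$. Taking $x=\tfrac{a+b}{2}$ and applying quasi-convexity on each half gives the Alomari--Darus--Kirmaci refinement (Theorem~3, inequality~(\ref{1})); a further use of quasi-convexity, namely $|f'(\tfrac{a+b}{2})|\le\max\{|f'(a)|,|f'(b)|\}$, then collapses that bound to Theorem~1. So your single-interval approach with kernel $(1-2t)$ reaches Theorem~1 directly in one step, while the paper's two-piece decomposition is designed to yield the sharper midpoint-based estimates of Theorems~3--5 and recovers Theorem~1 only as a corollary.
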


\begin{theorem}
Let $f:I^{o}\subset 
%TCIMACRO{\U{211d} }%
%BeginExpansion
\mathbb{R}
%EndExpansion
\rightarrow 
%TCIMACRO{\U{211d} }%
%BeginExpansion
\mathbb{R}
%EndExpansion
$ be a differentiable mapping on $I^{o}$, $a,b\in I^{o}$ with $a<b.$ If $%
\left\vert f^{\prime }\right\vert ^{\frac{p}{p-1}}$ is quasi-convex on $%
[a,b],$ then the following inequality holds:%
\begin{equation*}
\left\vert \frac{f(a)+f(b)}{2}-\frac{1}{b-a}\int_{a}^{b}f(u)du\right\vert
\leq \frac{b-a}{2(p+1)^{\frac{1}{p}}}\left( \max \left\{ \left\vert
f^{\prime }(a)\right\vert ^{\frac{p}{p-1}},\left\vert f^{\prime
}(b)\right\vert ^{\frac{p}{p-1}}\right\} \right) ^{\frac{p-1}{p}}.
\end{equation*}
\end{theorem}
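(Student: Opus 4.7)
The plan is to start from the same Montgomery-type identity that underlies the proof of Theorem~1, namely
\begin{equation*}
\frac{f(a)+f(b)}{2}-\frac{1}{b-a}\int_{a}^{b}f(u)\,du \;=\; \frac{b-a}{2}\int_{0}^{1}(1-2t)\,f'\!\left(ta+(1-t)b\right)dt,
\end{equation*}
which follows from a single integration by parts after the linear substitution $u = ta+(1-t)b$. I will treat this as a known lemma, since it is the standard starting point used in \cite{daI} for Theorem~1 and it requires no convexity hypothesis on $f$.

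Next, I take absolute values inside the integral and apply H\"older's inequality with the conjugate exponents $p$ and $q=p/(p-1)$, giving
\begin{equation*}
\left|\int_{0}^{1}(1-2t)f'(ta+(1-t)b)\,dt\right| \leq \left(\int_{0}^{1}|1-2t|^{p}dt\right)^{\!1/p}\!\left(\int_{0}^{1}|f'(ta+(1-t)b)|^{\frac{p}{p-1}}dt\right)^{\!\frac{p-1}{p}}.
\end{equation*}
The first factor is a short computation: splitting at $t=1/2$ and exploiting the symmetry of $|1-2t|$ about that point yields $\int_{0}^{1}|1-2t|^{p}dt = 1/(p+1)$, so the factor equals $(p+1)^{-1/p}$. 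For the second factor, the quasi-convexity hypothesis on $|f'|^{p/(p-1)}$ gives, for every $t\in[0,1]$, the pointwise bound
\begin{equation*}
|f'(ta+(1-t)b)|^{p/(p-1)} \;\leq\; \max\bigl\{|f'(a)|^{p/(p-1)},\,|f'(b)|^{p/(p-1)}\bigr\},
\end{equation*}
and integrating this bound over $[0,1]$ preserves it.

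Combining the two estimates with the prefactor $(b-a)/2$ from the identity reproduces exactly the right-hand side claimed in the theorem. There is no genuine obstacle in the argument: it is the natural H\"older refinement of the proof of Theorem~1, with the crude supremum bound there replaced by an $L^{p}$--$L^{q}$ split. The only place where a little care is warranted is the evaluation of $\int_{0}^{1}|1-2t|^{p}dt$, which I would perform by the symmetric split above rather than by a single substitution.
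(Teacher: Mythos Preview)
Your argument is correct: the Montgomery-type identity, the H\"older split with exponents $p$ and $q=p/(p-1)$, the evaluation $\int_0^1|1-2t|^p\,dt=1/(p+1)$, and the pointwise use of quasi-convexity all go through exactly as you describe, and they assemble to the stated bound.

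Note, however, that the paper does not actually prove this theorem; it is quoted from Ion \cite{daI} in the introduction as a known background result, so there is no in-paper proof to compare against. Your single-integral approach is the standard one and is presumably what appears in \cite{daI}. For contrast, the paper's own new results (Theorems~6--8) rest on a different identity, Lemma~1, which splits the interval at an intermediate point $x$ into two pieces and applies H\"older separately on each; specializing those results to $x=(a+b)/2$ recovers the two-term refinements of Alomari et al.\ (Theorems~3--5, inequalities (\ref{1})--(\ref{3})) rather than the single-term bound of Theorem~2. So the two routes are genuinely different in structure: yours gives Ion's estimate directly, while the paper's Lemma~1 machinery naturally produces the midpoint-refined versions instead.
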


In \cite{mmu}, Alomari et al. obtained the following results.

\begin{theorem}
Let $f:I^{o}\subset \lbrack 0,\infty )\rightarrow 
%TCIMACRO{\U{211d} }%
%BeginExpansion
\mathbb{R}
%EndExpansion
$ be a differentiable mapping on $I^{o}such$ $that$ $f^{\prime }\in L[a,b],$
where $a,b\in I^{o}$ with $a<b.$ If $\left\vert f^{\prime }\right\vert $ is
quasi-convex on $[a,b]$, then the following inequality holds:%
\begin{eqnarray}
\text{ \ \ \ \ \ \ \ \ \ \ \ }\left\vert \frac{f(a)+f(b)}{2}-\frac{1}{b-a}%
\int_{a}^{b}f(u)du\right\vert &\leq &\frac{b-a}{8}\left[ \max \left\{
\left\vert f^{\prime }\left( \frac{a+b}{2}\right) \right\vert ,\left\vert
f^{\prime }(a)\right\vert \right\} \right.  \label{1} \\
&&\left. +\max \left\{ \left\vert f^{\prime }\left( \frac{a+b}{2}\right)
\right\vert ,\left\vert f^{\prime }(b)\right\vert \right\} \right] .  \notag
\end{eqnarray}
\end{theorem}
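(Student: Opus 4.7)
The plan is to start from a standard integral identity expressing the trapezoidal deviation in terms of $f'$, split the parameter interval at its midpoint so that $f'\!\left(\frac{a+b}{2}\right)$ can enter naturally, and then apply the quasi-convexity hypothesis on each half. The first step is to record (by integration by parts on the right-hand side) the identity
\[
\frac{f(a)+f(b)}{2}-\frac{1}{b-a}\int_{a}^{b}f(u)\,du=\frac{b-a}{2}\int_{0}^{1}(1-2t)\,f^{\prime }\!\bigl(ta+(1-t)b\bigr)\,dt,
\]
which is the usual launching point for right Hermite--Hadamard estimates of Dragomir--Agarwal type. Taking absolute values and using the triangle inequality reduces the problem to bounding the integrals of $|1-2t|\,|f^{\prime }(ta+(1-t)b)|$ over $[0,\frac{1}{2}]$ and $[\frac{1}{2},1]$ separately.

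The crucial step is to rewrite the argument $ta+(1-t)b$ on each half as a convex combination involving the midpoint. On $[0,\frac{1}{2}]$ one observes that
\[
ta+(1-t)b=2t\cdot \frac{a+b}{2}+(1-2t)\,b,
\]
which is a convex combination of $\frac{a+b}{2}$ and $b$ since $2t\in [0,1]$; quasi-convexity of $|f^{\prime }|$ therefore yields $|f^{\prime }(ta+(1-t)b)|\leq \max \{|f^{\prime }(\frac{a+b}{2})|,|f^{\prime }(b)|\}$. Symmetrically, on $[\frac{1}{2},1]$ the decomposition $ta+(1-t)b=(2t-1)a+(2-2t)\cdot \frac{a+b}{2}$ produces the analogous upper bound with $|f^{\prime }(a)|$ in place of $|f^{\prime }(b)|$. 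In both cases the relevant maximum is independent of $t$, so it pulls out of the integral.

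Finally, one computes the elementary weights $\int_{0}^{1/2}(1-2t)\,dt=\int_{1/2}^{1}(2t-1)\,dt=\frac{1}{4}$, and multiplying by the prefactor $\frac{b-a}{2}$ produces the coefficient $\frac{b-a}{8}$ in front of each maximum, exactly matching \eqref{1}. I do not foresee any serious obstacle: the only genuine design choice is splitting the $t$-integral at $\frac{1}{2}$, which is dictated precisely by the wish to have $\frac{a+b}{2}$ appear as an endpoint of each convex combination so that quasi-convexity can be invoked cleanly; everything else is routine integration.
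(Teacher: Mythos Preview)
Your argument is correct: the Dragomir--Agarwal identity, the split at $t=\tfrac12$, the rewriting of $ta+(1-t)b$ as a convex combination with endpoint $\tfrac{a+b}{2}$, and the evaluation of $\int_{0}^{1/2}(1-2t)\,dt=\int_{1/2}^{1}(2t-1)\,dt=\tfrac14$ all check out, and they assemble into exactly the stated bound.

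The paper, however, does not prove this statement directly but obtains it as the special case $x=\tfrac{a+b}{2}$ of its Theorem~6, which in turn rests on a different integral identity (Lemma~1):
\[
\frac{(b-x)f(b)+(x-a)f(a)}{b-a}-\frac{1}{b-a}\int_{a}^{b}f(u)\,du
=\frac{(x-a)^{2}}{b-a}\int_{0}^{1}(t-1)f'(tx+(1-t)a)\,dt
+\frac{(b-x)^{2}}{b-a}\int_{0}^{1}(1-t)f'(tx+(1-t)b)\,dt.
\]
Because this identity already has $x$ as an endpoint in each convex combination $tx+(1-t)a$ and $tx+(1-t)b$, quasi-convexity applies immediately with no need for your splitting-and-rewriting trick. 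What the paper's route buys is generality: one gets an inequality for every $x\in[a,b]$ and then specializes. What your route buys is economy for this particular case: you never leave the familiar single-kernel identity, and the midpoint enters only through the elementary algebraic decompositions you wrote down. The two computations are equivalent after the substitution $x=\tfrac{a+b}{2}$ and an obvious change of variable, so neither is deeper than the other.
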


\begin{theorem}
Let $f:I^{o}\subset \lbrack 0,\infty )\rightarrow 
%TCIMACRO{\U{211d} }%
%BeginExpansion
\mathbb{R}
%EndExpansion
$ be a differentiable mapping on $I^{o}such$ $that$ $f^{\prime }\in L[a,b],$
where $a,b\in I^{o}$ with $a<b.$ If $\left\vert f^{\prime }\right\vert ^{%
\frac{p}{p-1}}$ is quasi-convex on $[a,b],$ for $p>1$ then the following
inequality holds:%
\begin{eqnarray}
&&\left\vert \frac{f(a)+f(b)}{2}-\frac{1}{b-a}\int_{a}^{b}f(u)du\right\vert
\label{2} \\
&\leq &\frac{b-a}{4}\left( \frac{1}{p+1}\right) ^{\frac{1}{p}}\left[ \left(
\max \left\{ \left\vert f^{\prime }(\frac{a+b}{2})\right\vert ^{\frac{p}{p-1}%
},\left\vert f^{\prime }(a)\right\vert ^{\frac{p}{p-1}}\right\} \right) ^{%
\frac{p-1}{p}}\right.  \notag \\
&&\left. +\left( \max \left\{ \left\vert f^{\prime }(\frac{a+b}{2}%
)\right\vert ^{\frac{p}{p-1}},\left\vert f^{\prime }(b)\right\vert ^{\frac{p%
}{p-1}}\right\} \right) ^{\frac{p-1}{p}}\right] .  \notag
\end{eqnarray}
\end{theorem}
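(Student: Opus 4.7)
The plan is to mimic the strategy behind Theorem 3 (inequality (\ref{1})) but to apply H\"older's inequality instead of the crude $L^\infty$ estimate on $|f'|$. The natural starting point is the same integral identity: by integrating by parts on $[a,(a+b)/2]$ and $[(a+b)/2,b]$ separately and normalizing via $u=t\tfrac{a+b}{2}+(1-t)a$ on the first piece and $u=(1-t)\tfrac{a+b}{2}+tb$ on the second, one obtains
\begin{equation*}
\frac{f(a)+f(b)}{2}-\frac{1}{b-a}\int_a^b f(u)\,du=\frac{b-a}{4}\left[\int_0^1 t\,f'\!\left(t\tfrac{a+b}{2}+(1-t)a\right)dt-\int_0^1 t\,f'\!\left((1-t)\tfrac{a+b}{2}+tb\right)dt\right].
\end{equation*}
This identity is presumably the same lemma that underlies inequality (\ref{1}); I would cite it from the Alomari et al.\ paper rather than rederive it.

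Next I would take absolute values, apply the triangle inequality to split into two summands, and on each summand invoke H\"older's inequality with conjugate exponents $p$ and $p/(p-1)$:
\begin{equation*}
\int_0^1 t\,|f'(\cdot)|\,dt\leq \left(\int_0^1 t^{p}\,dt\right)^{1/p}\left(\int_0^1 |f'(\cdot)|^{p/(p-1)}\,dt\right)^{(p-1)/p}.
\end{equation*}
The first factor is $(p+1)^{-1/p}$, producing the constant that sits outside the bracket in the claimed bound.

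For the second factor I would use the quasi-convexity of $|f'|^{p/(p-1)}$: every argument $t\tfrac{a+b}{2}+(1-t)a$ is a convex combination of $\tfrac{a+b}{2}$ and $a$, so by definition
\begin{equation*}
\left|f'\!\left(t\tfrac{a+b}{2}+(1-t)a\right)\right|^{p/(p-1)}\leq \max\left\{\left|f'\!\left(\tfrac{a+b}{2}\right)\right|^{p/(p-1)},|f'(a)|^{p/(p-1)}\right\},
\end{equation*}
and analogously with $b$ on the second piece. Each of these constants pulls out of its integral (the remaining $\int_0^1 dt$ equals $1$), and raising to the power $(p-1)/p$ produces exactly the two summands inside the bracket in (\ref{2}). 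Assembling the pieces with the prefactor $(b-a)/4$ yields the stated inequality.

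The main obstacle, if any, is bookkeeping: one must be careful that the H\"older exponent conventions match (the paper uses $p/(p-1)$, not the usual $q$), and that the two arguments of $f'$ sweep out the correct segments so that the quasi-convexity bound really gives the two distinct maxima appearing in (\ref{2}). No new analytic idea is needed beyond quasi-convexity and H\"older; the argument is essentially a routine refinement of the proof of Theorem 3.
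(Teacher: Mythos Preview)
Your approach is correct and coincides with the paper's: the paper proves the more general Theorem~7 from Lemma~1 (the two-piece integration-by-parts identity with a free interior point $x$) by applying H\"older's inequality and the quasi-convexity of $|f'|^{p/(p-1)}$ exactly as you outline, and then recovers the stated inequality by specializing to $x=\tfrac{a+b}{2}$ (Remark~2). One small caveat: the explicit identity you display is not quite right as written (with kernel $t$ the first argument should be $(1-t)\tfrac{a+b}{2}+ta$, or equivalently the kernel should be $(1-t)$ with your argument; you can check your version fails for $f(u)=u^2$), but since $\int_0^1 t^p\,dt=\int_0^1(1-t)^p\,dt$ and the convex combination still lies in the correct subinterval, the discrepancy disappears after taking absolute values and applying H\"older---and, as you say, in practice you would simply cite the lemma from Alomari et~al.
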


\begin{theorem}
Let $f:I^{o}\subset 
%TCIMACRO{\U{211d} }%
%BeginExpansion
\mathbb{R}
%EndExpansion
\rightarrow 
%TCIMACRO{\U{211d} }%
%BeginExpansion
\mathbb{R}
%EndExpansion
$ be a differentiable mapping on $I^{o}$, $a,b\in I^{o}$ with $a<b.$ If $%
\left\vert f^{\prime }\right\vert ^{q}$ is quasi-convex on $[a,b],q\geq 1,$
then the following inequality holds:%
\begin{eqnarray}
&&\left\vert \frac{f(a)+f(b)}{2}-\frac{1}{b-a}\int_{a}^{b}f(u)du\right\vert
\label{3} \\
&\leq &\frac{b-a}{8}\left[ \left( \max \left\{ \left\vert f^{\prime }(\frac{%
a+b}{2})\right\vert ^{q},\left\vert f^{\prime }(a)\right\vert ^{q}\right\}
\right) ^{\frac{1}{q}}\right.  \notag \\
&&\left. +\left( \max \left\{ \left\vert f^{\prime }(\frac{a+b}{2}%
)\right\vert ^{q},\left\vert f^{\prime }(b)\right\vert ^{q}\right\} \right)
^{\frac{1}{q}}\right] .  \notag
\end{eqnarray}
\end{theorem}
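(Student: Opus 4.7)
The plan is to follow the same strategy that converts Theorem 1 into Theorem 2 and, analogously, Theorem 3 into Theorem 4: start from the integral identity underlying Theorem 3, and apply the power mean inequality (in place of a direct absolute value estimate) before invoking the quasi-convexity of $\left\vert f^{\prime}\right\vert^{q}$.

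First, I would recall the Alomari--type identity that represents
\[
\frac{f(a)+f(b)}{2}-\frac{1}{b-a}\int_{a}^{b}f(u)\,du
\]
as the sum of two integrals over $[0,1]$, one collecting the contribution from the subinterval $[a,\frac{a+b}{2}]$ and the other from $[\frac{a+b}{2},b]$, each weighted by an affine function of $t$. This is precisely the identity used (explicitly or implicitly) in the proof of Theorem 3, and the $L^{1}([0,1])$-norm of each weight already produces the numerical constant $\frac{b-a}{8}$ once the outer scaling is factored out.

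Next, after applying the triangle inequality inside each of the two integrals, I would invoke the power mean inequality
\[
\int_{0}^{1}w(t)\,|f^{\prime}(\cdot)|\,dt\;\le\;\left(\int_{0}^{1}w(t)\,dt\right)^{1-\frac{1}{q}}\left(\int_{0}^{1}w(t)\,|f^{\prime}(\cdot)|^{q}\,dt\right)^{\frac{1}{q}},
\]
valid for $q\ge 1$ and nonnegative weight $w$. The quasi-convexity of $\left\vert f^{\prime}\right\vert^{q}$ then bounds $|f^{\prime}(ta+(1-t)\frac{a+b}{2})|^{q}$ and $|f^{\prime}(tb+(1-t)\frac{a+b}{2})|^{q}$ by the corresponding maxima $\max\{|f^{\prime}(\frac{a+b}{2})|^{q},|f^{\prime}(a)|^{q}\}$ and $\max\{|f^{\prime}(\frac{a+b}{2})|^{q},|f^{\prime}(b)|^{q}\}$, which, being constants in $t$, can be pulled outside the integrals. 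The two weight integrals coincide (both equal $\frac{1}{4}$ after the affine change of variables), so the two factors $\bigl(\int w\bigr)^{1-1/q}\bigl(\int w\bigr)^{1/q}=\int w$ collapse cleanly and the constant $\frac{b-a}{8}$ reappears.

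The main obstacle is purely bookkeeping: ensuring that the constants produced by the power mean step on each half-interval combine correctly to reproduce $\frac{b-a}{8}$, and that the two maxima remain separated as in (\ref{3}). Since specialising to $q=1$ must reduce exactly to Theorem 3, this serves as a convenient internal sanity check; no ingredient beyond the Alomari identity, the power mean inequality, and the quasi-convexity hypothesis is needed.
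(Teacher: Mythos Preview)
Your proposal is correct and follows essentially the same route as the paper: the paper obtains inequality~(\ref{3}) as the special case $x=\frac{a+b}{2}$ of its Theorem~8, whose proof uses Lemma~1 (which at $x=\frac{a+b}{2}$ is exactly the Alomari-type midpoint identity you invoke), then the power mean inequality with weight $w(t)=1-t$, and finally the quasi-convexity of $|f'|^{q}$ to replace the integrand by the appropriate maxima. The only cosmetic difference is that the paper carries the argument through for a general point $x\in[a,b]$ before specialising, whereas you work directly at the midpoint; the ingredients and the bookkeeping are otherwise identical.
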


The main purpose of this study is to generalize the Theorem 3, Theorem 4 and
Theorem 5 for quasi-convex functions using the new Lemma.

\section{HERM\.{I}TE-HADAMARD TYPE INEQUALITIES}

\begin{lemma}
Let $f:I\subset 
%TCIMACRO{\U{211d} }%
%BeginExpansion
\mathbb{R}
%EndExpansion
\rightarrow 
%TCIMACRO{\U{211d} }%
%BeginExpansion
\mathbb{R}
%EndExpansion
$ be a differentiable mapping on $I^{o}$ where $a,b\in I$ with $a<b.$ If $%
f^{\prime }\in L[a,b],$ then the following equality holds:%
\begin{eqnarray*}
\frac{(b-x)f(b)+(x-a)f(a)}{b-a}-\frac{1}{b-a}\int_{a}^{b}f(u)du &=&\frac{%
(x-a)^{2}}{b-a}\int_{0}^{1}(t-1)f^{\prime }(tx+(1-t)a)dt \\
&&+\frac{(b-x)^{2}}{b-a}\int_{0}^{1}(1-t)f^{\prime }(tx+(1-t)b)dt.
\end{eqnarray*}
\end{lemma}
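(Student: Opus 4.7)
The plan is to apply integration by parts to each of the two integrals on the right-hand side of the claimed identity, and then use a linear substitution to recognize the resulting integrals of $f$ as subintervals of $\int_a^b f(u)\,du$. Write the right-hand side as $\frac{(x-a)^{2}}{b-a}I_{1}+\frac{(b-x)^{2}}{b-a}I_{2}$, where $I_{1}=\int_{0}^{1}(t-1)f'(tx+(1-t)a)\,dt$ and $I_{2}=\int_{0}^{1}(1-t)f'(tx+(1-t)b)\,dt$.

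For $I_{1}$, I would integrate by parts with $u=t-1$ and $dv=f'(tx+(1-t)a)\,dt$, so $du=dt$ and $v=\frac{1}{x-a}f(tx+(1-t)a)$. The factor $t-1$ annihilates the boundary term at $t=1$, while at $t=0$ we pick up $\frac{f(a)}{x-a}$. The remaining integral $\int_{0}^{1}f(tx+(1-t)a)\,dt$ becomes, via the linear substitution $u=tx+(1-t)a$, exactly $\frac{1}{x-a}\int_{a}^{x}f(u)\,du$. Multiplying through by $\frac{(x-a)^{2}}{b-a}$ yields $\frac{(x-a)f(a)}{b-a}-\frac{1}{b-a}\int_{a}^{x}f(u)\,du$.

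The treatment of $I_{2}$ is parallel: integrate by parts with $u=1-t$ and $dv=f'(tx+(1-t)b)\,dt$, so $v=\frac{1}{x-b}f(tx+(1-t)b)$. The boundary term at $t=1$ vanishes, and $t=0$ contributes $-\frac{f(b)}{x-b}=\frac{f(b)}{b-x}$. After the substitution $u=tx+(1-t)b$ (which reverses orientation since $x-b<0$), the remaining integral converts to $\frac{1}{(b-x)^{2}}\int_{x}^{b}f(u)\,du$, so that $\frac{(b-x)^{2}}{b-a}I_{2}=\frac{(b-x)f(b)}{b-a}-\frac{1}{b-a}\int_{x}^{b}f(u)\,du$. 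Adding the two identities, the pieces $\int_{a}^{x}$ and $\int_{x}^{b}$ splice together into $\int_{a}^{b}$, and the surviving boundary contributions assemble into $\frac{(b-x)f(b)+(x-a)f(a)}{b-a}$, which matches the left-hand side.

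The computation is essentially routine; the only place some care is required is the sign-bookkeeping in $I_{2}$, where $x-b$ is negative, so that reversing the limits of integration after the substitution is what produces the correctly signed contribution. No convexity or quasi-convexity hypothesis on $f$ is invoked at this stage — only differentiability and integrability of $f'$ — which is consistent with the hypotheses stated in the lemma.
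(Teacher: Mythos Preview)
Your argument is correct: integration by parts on each of $I_1$ and $I_2$, followed by the linear substitutions $u=tx+(1-t)a$ and $u=tx+(1-t)b$, produces exactly the two pieces $\frac{(x-a)f(a)}{b-a}-\frac{1}{b-a}\int_a^x f(u)\,du$ and $\frac{(b-x)f(b)}{b-a}-\frac{1}{b-a}\int_x^b f(u)\,du$, and these add to the left-hand side. (In your description of $I_2$ the phrase ``converts to $\frac{1}{(b-x)^{2}}\int_{x}^{b}f(u)\,du$'' drops a minus sign in the narrative, but your final displayed expression for $\frac{(b-x)^{2}}{b-a}I_{2}$ is correct, so this is only an expositional slip.)

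As for comparison with the paper: the paper states Lemma~1 without proof and proceeds directly to Theorem~6, so there is no authors' proof to compare against. Your integration-by-parts computation is the standard and expected route for identities of this type, and is precisely the kind of argument the authors implicitly rely on.
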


\begin{theorem}
Let $f:I^{o}\subset 
%TCIMACRO{\U{211d} }%
%BeginExpansion
\mathbb{R}
%EndExpansion
\rightarrow 
%TCIMACRO{\U{211d} }%
%BeginExpansion
\mathbb{R}
%EndExpansion
$ be a differentiable mapping on $I^{o}$, $a,b\in I^{o}$ with $a<b.$ If $%
\left\vert f^{\prime }\right\vert $ is quasi-convex on $[a,b]$, then the
following inequality holds:%
\begin{eqnarray*}
\left\vert \frac{(b-x)f(b)+(x-a)f(a)}{b-a}-\frac{1}{b-a}\int_{a}^{b}f(u)du%
\right\vert &\leq &\frac{(x-a)^{2}}{2(b-a)}\max \left\{ \left\vert f^{\prime
}(x)\right\vert ,\left\vert f^{\prime }(a)\right\vert \right\} \\
&&+\frac{(b-x)^{2}}{2(b-a)}\max \{\left\vert f^{\prime }(x)\right\vert
,\left\vert f^{\prime }(b)\right\vert \}.
\end{eqnarray*}
\end{theorem}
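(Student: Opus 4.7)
The plan is to take the identity provided by Lemma 1 as the starting point and convert it into an inequality via the standard modulus-plus-triangle-inequality routine, then exploit quasi-convexity to bound the integrand pointwise by a $t$-independent constant so that the integrals reduce to elementary ones.

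\medskip

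First I would apply the absolute value to both sides of the identity in Lemma 1 and use the triangle inequality, together with the fact that the modulus passes under the integral sign. Since $t \in [0,1]$, both $|t-1|$ and $|1-t|$ simplify to $1-t$, so the bound becomes
\[
\frac{(x-a)^{2}}{b-a}\int_{0}^{1}(1-t)\bigl|f^{\prime}(tx+(1-t)a)\bigr|\,dt + \frac{(b-x)^{2}}{b-a}\int_{0}^{1}(1-t)\bigl|f^{\prime}(tx+(1-t)b)\bigr|\,dt.
\]

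\medskip

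Next, since $|f^{\prime}|$ is quasi-convex on $[a,b]$, for every $t\in[0,1]$ we have the pointwise estimates
\[
\bigl|f^{\prime}(tx+(1-t)a)\bigr| \leq \max\{|f^{\prime}(x)|,|f^{\prime}(a)|\}, \quad \bigl|f^{\prime}(tx+(1-t)b)\bigr| \leq \max\{|f^{\prime}(x)|,|f^{\prime}(b)|\}.
\]
These bounds are constants in $t$, so they factor out of the integrals, leaving me with the single integral $\int_{0}^{1}(1-t)\,dt = \tfrac{1}{2}$ in each term. Substituting gives precisely the right-hand side asserted in the theorem.

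\medskip

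I do not anticipate any real obstacle here: the argument is a direct chaining of Lemma 1 with the defining inequality of quasi-convexity, and the only quantitative input is the trivial computation $\int_0^1(1-t)\,dt=1/2$. The only point that deserves mild care is the observation that, although quasi-convexity does not in general interact well with integration (unlike convexity, where one can use Jensen), here it is still effective because it yields a uniform pointwise bound independent of $t$, which is exactly what is needed to decouple the maximum from the integral.
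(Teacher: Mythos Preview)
Your proposal is correct and follows exactly the same route as the paper's own proof: apply Lemma~1, pass to absolute values with the triangle inequality, invoke quasi-convexity of $|f'|$ to replace each integrand by the $t$-independent maximum, and then evaluate $\int_0^1(1-t)\,dt=\tfrac12$. There is no substantive difference between your argument and the paper's.
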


\begin{proof}
From Lemma 1, we have%
\begin{eqnarray*}
&&\left\vert \frac{(b-x)f(b)+(x-a)f(a)}{b-a}-\frac{1}{b-a}%
\int_{a}^{b}f(u)du\right\vert \\
&\leq &\frac{(x-a)^{2}}{b-a}\int_{0}^{1}(1-t)\left\vert f^{\prime
}(tx+(1-t)a)\right\vert dt \\
&&+\frac{(b-x)^{2}}{b-a}\int_{0}^{1}(1-t)\left\vert f^{\prime
}(tx+(1-t)b)\right\vert dt \\
&\leq &\frac{(x-a)^{2}}{b-a}\int_{0}^{1}(1-t)\max \{\left\vert f^{\prime
}(x)\right\vert ,\left\vert f^{\prime }(a)\right\vert \}dt \\
&&+\frac{(b-x)^{2}}{b-a}\int_{0}^{1}(1-t)\max \{\left\vert f^{\prime
}(x)\right\vert ,\left\vert f^{\prime }(b)\right\vert \}dt \\
&=&\frac{(x-a)^{2}}{b-a}\max \{\left\vert f^{\prime }(x)\right\vert
,\left\vert f^{\prime }(a)\right\vert \}\int_{0}^{1}(1-t)dt \\
&&+\frac{(b-x)^{2}}{b-a}\max \{\left\vert f^{\prime }(x)\right\vert
,\left\vert f^{\prime }(b)\right\vert \}\int_{0}^{1}(1-t)dt \\
&=&\frac{(x-a)^{2}}{2(b-a)}\max \{\left\vert f^{\prime }(x)\right\vert
,\left\vert f^{\prime }(a)\right\vert \} \\
&&+\frac{(b-x)^{2}}{2(b-a)}\max \{\left\vert f^{\prime }(x)\right\vert
,\left\vert f^{\prime }(b)\right\vert \},
\end{eqnarray*}%
which completes the proof.
\end{proof}

\begin{remark}
In Theorem 6,if we choose $x=\frac{a+b}{2},$ we obtain (\ref{1}) inequality.
\end{remark}

\begin{theorem}
Let $f:I^{o}\subset 
%TCIMACRO{\U{211d} }%
%BeginExpansion
\mathbb{R}
%EndExpansion
\rightarrow 
%TCIMACRO{\U{211d} }%
%BeginExpansion
\mathbb{R}
%EndExpansion
$ be a differentiable mapping on $I^{o}$, $a,b\in I^{o}$ with $a<b.$ If $%
\left\vert f^{\prime }\right\vert ^{\frac{p}{p-1}}$ is quasi-convex on $%
[a,b],p>1,$ then the following inequality holds:%
\begin{eqnarray*}
&&\left\vert \frac{(b-x)f(b)+(x-a)f(a)}{b-a}-\frac{1}{b-a}%
\int_{a}^{b}f(u)du\right\vert \\
&\leq &\frac{(x-a)^{2}}{b-a}\left( \frac{1}{p+1}\right) ^{\frac{1}{p}}\left(
\max \left\{ \left\vert f^{\prime }(x)\right\vert ^{\frac{p}{p-1}%
},\left\vert f^{\prime }(a)\right\vert ^{\frac{p}{p-1}}\right\} \right) ^{%
\frac{p-1}{p}} \\
&&+\frac{(b-x)^{2}}{b-a}\left( \frac{1}{p+1}\right) ^{\frac{1}{p}}\left(
\max \left\{ \left\vert f^{\prime }(x)\right\vert ^{\frac{p}{p-1}%
},\left\vert f^{\prime }(b)\right\vert ^{\frac{p}{p-1}}\right\} \right) ^{%
\frac{p-1}{p}}
\end{eqnarray*}%
where $q=p/(p-1).$
\end{theorem}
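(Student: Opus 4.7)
The plan is to start from Lemma 1 (applied with absolute values moved inside the integral, using $|t-1| = 1-t$ for $t \in [0,1]$) and then apply H\"older's inequality to each of the two resulting integrals separately. Specifically, after passing to absolute values we have
\begin{eqnarray*}
&&\left\vert \frac{(b-x)f(b)+(x-a)f(a)}{b-a}-\frac{1}{b-a}\int_{a}^{b}f(u)du\right\vert \\
&\leq& \frac{(x-a)^{2}}{b-a}\int_{0}^{1}(1-t)\left\vert f^{\prime}(tx+(1-t)a)\right\vert dt + \frac{(b-x)^{2}}{b-a}\int_{0}^{1}(1-t)\left\vert f^{\prime}(tx+(1-t)b)\right\vert dt,
\end{eqnarray*}
exactly as in the proof of Theorem 6.

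Next, to each of the two integrals I would apply H\"older's inequality with conjugate exponents $p$ and $q=p/(p-1)$, splitting $(1-t)|f'(\cdot)|$ as $(1-t)\cdot |f'(\cdot)|$. For the first integral this gives
\begin{equation*}
\int_{0}^{1}(1-t)\left\vert f^{\prime}(tx+(1-t)a)\right\vert dt \leq \left(\int_{0}^{1}(1-t)^{p}dt\right)^{1/p}\left(\int_{0}^{1}\left\vert f^{\prime}(tx+(1-t)a)\right\vert^{p/(p-1)}dt\right)^{(p-1)/p},
\end{equation*}
and similarly for the second. The first factor evaluates to $(1/(p+1))^{1/p}$ by direct integration.

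For the second factor, I would invoke the quasi-convexity of $|f'|^{p/(p-1)}$: since $tx+(1-t)a$ is a convex combination of $x$ and $a$, we have $|f'(tx+(1-t)a)|^{p/(p-1)} \le \max\{|f'(x)|^{p/(p-1)}, |f'(a)|^{p/(p-1)}\}$, and this bound is independent of $t$, so the integral over $[0,1]$ is just this maximum. Raising to the power $(p-1)/p$ yields the bracketed expression in the theorem. Applying the same reasoning to the second integral with $b$ in place of $a$, and multiplying by the respective prefactors $(x-a)^{2}/(b-a)$ and $(b-x)^{2}/(b-a)$, produces the claimed inequality.

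I do not anticipate any serious obstacle: the argument is a direct parallel of Theorem 4 in \cite{mmu}, with the midpoint $(a+b)/2$ replaced by the general point $x$ via Lemma 1. The only mild care needed is to keep the two H\"older applications symmetric and to verify the exponent bookkeeping $q=p/(p-1)$ so that $(\int (1-t)^p dt)^{1/p}=(p+1)^{-1/p}$ comes out correctly; taking $x=(a+b)/2$ should recover inequality (\ref{2}) as a consistency check.
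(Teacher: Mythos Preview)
Your proposal is correct and follows essentially the same argument as the paper: start from Lemma~1, pass to absolute values, apply H\"older's inequality with exponents $p$ and $q=p/(p-1)$ to each integral, evaluate $\int_0^1(1-t)^p\,dt=1/(p+1)$, and bound the remaining factor by quasi-convexity of $|f'|^{q}$. The paper's proof is identical in structure and in the choice of splitting for H\"older.
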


\begin{proof}
From Lemma 1 and using well known H\"{o}lder inequality, we have 
\begin{eqnarray*}
&&\left\vert \frac{(b-x)f(b)+(x-a)f(a)}{b-a}-\frac{1}{b-a}%
\int_{a}^{b}f(u)du\right\vert \\
&\leq &\frac{(x-a)^{2}}{b-a}\int_{0}^{1}(1-t)\left\vert f^{\prime
}(tx+(1-t)a)\right\vert dt \\
&&+\frac{(b-x)^{2}}{b-a}\int_{0}^{1}(1-t)\left\vert f^{\prime
}(tx+(1-t)b)\right\vert dt \\
&\leq &\frac{(x-a)^{2}}{b-a}\left( \int_{0}^{1}(1-t)^{p}dt\right) ^{\frac{1}{%
p}}\left( \int_{0}^{1}\left\vert f^{\prime }(tx+(1-t)a)\right\vert ^{\frac{p%
}{p-1}}dt\right) ^{\frac{p-1}{p}} \\
&&+\frac{(b-x)^{2}}{b-a}\left( \int_{0}^{1}(1-t)^{p}dt\right) ^{\frac{1}{p}%
}\left( \int_{0}^{1}\left\vert f^{\prime }(tx+(1-t)b)\right\vert ^{\frac{p}{%
p-1}}dt\right) ^{\frac{p-1}{p}} \\
&\leq &\frac{(x-a)^{2}}{b-a}\left( \int_{0}^{1}(1-t)^{p}dt\right) ^{\frac{1}{%
p}}\left( \int_{0}^{1}\max \left\{ \left\vert f^{\prime }(x)\right\vert ^{%
\frac{p}{p-1}},\left\vert f^{\prime }(a)\right\vert ^{\frac{p}{p-1}}\right\}
dt\right) ^{\frac{p-1}{p}} \\
&&+\frac{(b-x)^{2}}{b-a}\left( \int_{0}^{1}(1-t)^{p}dt\right) ^{\frac{1}{p}%
}\left( \int_{0}^{1}\max \left\{ \left\vert f^{\prime }(x)\right\vert ^{%
\frac{p}{p-1}},\left\vert f^{\prime }(b)\right\vert ^{\frac{p}{p-1}}\right\}
dt\right) ^{\frac{p-1}{p}} \\
&=&\frac{(x-a)^{2}}{b-a}\left( \frac{1}{p+1}\right) ^{\frac{1}{p}}\left(
\max \left\{ \left\vert f^{\prime }(x)\right\vert ^{\frac{p}{p-1}%
},\left\vert f^{\prime }(a)\right\vert ^{\frac{p}{p-1}}\right\} \right) ^{%
\frac{p-1}{p}} \\
&&+\frac{(b-x)^{2}}{b-a}\left( \frac{1}{p+1}\right) ^{\frac{1}{p}}\left(
\max \left\{ \left\vert f^{\prime }(x)\right\vert ^{\frac{p}{p-1}%
},\left\vert f^{\prime }(b)\right\vert ^{\frac{p}{p-1}}\right\} \right) ^{%
\frac{p-1}{p}}
\end{eqnarray*}%
where $\frac{1}{p}+\frac{1}{q}=1,$ which completes the proof.
\end{proof}

\begin{remark}
In Theorem 7, if we choose $x=\frac{a+b}{2},$ we obtain (\ref{2}) inequality.
\end{remark}

\begin{theorem}
Let $f:I^{o}\subset 
%TCIMACRO{\U{211d} }%
%BeginExpansion
\mathbb{R}
%EndExpansion
\rightarrow 
%TCIMACRO{\U{211d} }%
%BeginExpansion
\mathbb{R}
%EndExpansion
$ be a differentiable mapping on $I^{o}$, $a,b\in I^{o}$ with $a<b.$ If $%
\left\vert f^{\prime }\right\vert ^{q}$ is quasi-convex on $[a,b],q\geq 1,$
then the following inequality holds:%
\begin{eqnarray*}
\left\vert \frac{(b-x)f(b)+(x-a)f(a)}{b-a}-\frac{1}{b-a}\int_{a}^{b}f(u)du%
\right\vert &\leq &\frac{(x-a)^{2}}{2\left( b-a\right) }\left( \max \left\{
\left\vert f^{\prime }(x)\right\vert ^{q},\left\vert f^{\prime
}(a)\right\vert ^{q}\right\} \right) ^{\frac{1}{q}} \\
&&+\frac{(b-x)^{2}}{2\left( b-a\right) }\left( \max \left\{ \left\vert
f^{\prime }(x)\right\vert ^{q},\left\vert f^{\prime }(b)\right\vert
^{q}\right\} \right) ^{\frac{1}{q}}.
\end{eqnarray*}
\end{theorem}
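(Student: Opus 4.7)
The plan is to follow the same template as the proofs of Theorems~6 and~7, but using the power mean (weighted Jensen) inequality in place of Hölder's inequality, since the exponent $q\geq 1$ need not be conjugate to anything. First I would invoke Lemma~1 and pass to absolute values to obtain the bound
\begin{eqnarray*}
&&\left\vert \frac{(b-x)f(b)+(x-a)f(a)}{b-a}-\frac{1}{b-a}\int_{a}^{b}f(u)du\right\vert \\
&\leq& \frac{(x-a)^{2}}{b-a}\int_{0}^{1}(1-t)\left\vert f^{\prime}(tx+(1-t)a)\right\vert dt +\frac{(b-x)^{2}}{b-a}\int_{0}^{1}(1-t)\left\vert f^{\prime}(tx+(1-t)b)\right\vert dt.
\end{eqnarray*}

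Next I would treat each of the two integrals in the same way. Writing the integrand $(1-t)|f'(\cdot)|$ as $(1-t)^{1-1/q}\cdot (1-t)^{1/q}|f'(\cdot)|$ and applying the power mean inequality with exponent $q$, each integral is bounded by
\begin{equation*}
\left(\int_{0}^{1}(1-t)\,dt\right)^{1-\frac{1}{q}}\left(\int_{0}^{1}(1-t)\left\vert f^{\prime}(\cdot)\right\vert^{q}dt\right)^{\frac{1}{q}}.
\end{equation*}
The first factor is $(1/2)^{1-1/q}$, which is the key computation that makes $q=1$ recover Theorem~6 without splitting cases.

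Then I would use the quasi-convexity of $|f'|^{q}$ to replace $|f'(tx+(1-t)a)|^{q}$ by $\max\{|f'(x)|^{q},|f'(a)|^{q}\}$ (and similarly with $b$ for the second integral). The resulting constants pull out of the integrals, leaving another factor of $\int_{0}^{1}(1-t)\,dt=1/2$ inside the $1/q$-power; combining with the earlier $(1/2)^{1-1/q}$ gives exactly $1/2$, which produces the $1/(2(b-a))$ coefficients in the claimed inequality.

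There is no genuine obstacle here: the routine is identical in spirit to the proofs of Theorems~6 and~7 already in the paper, and the only place requiring a small choice is the decomposition $(1-t)=(1-t)^{1-1/q}(1-t)^{1/q}$ used to set up the power mean inequality; any other split (for instance pulling out a constant weight) would leave a suboptimal power of $1/2$ and fail to match the stated constant. Finally I would remark that setting $x=(a+b)/2$ recovers inequality~(\ref{3}), analogous to Remarks~1 and~2.
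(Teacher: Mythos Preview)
Your proposal is correct and follows essentially the same route as the paper's proof: invoke Lemma~1, apply the power mean inequality with weight $(1-t)$ to produce the factors $\left(\int_{0}^{1}(1-t)\,dt\right)^{1-1/q}$ and $\left(\int_{0}^{1}(1-t)\left\vert f^{\prime}(\cdot)\right\vert^{q}dt\right)^{1/q}$, then use quasi-convexity of $|f'|^{q}$ to bound the second factor by $\frac{1}{2}\max\{\cdot,\cdot\}$ and combine. Your explicit decomposition $(1-t)=(1-t)^{1-1/q}(1-t)^{1/q}$ is just one way of phrasing the power mean step, and your closing remark about $x=\frac{a+b}{2}$ matches Remark~3.
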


\begin{proof}
From Lemma 1 and using the well known power mean inequality, we have%
\begin{eqnarray*}
&&\left\vert \frac{(b-x)f(b)+(x-a)f(a)}{b-a}-\frac{1}{b-a}%
\int_{a}^{b}f(u)du\right\vert \\
&\leq &\frac{(x-a)^{2}}{b-a}\int_{0}^{1}(1-t)\left\vert f^{\prime
}(tx+(1-t)a)\right\vert dt \\
&&+\frac{(b-x)^{2}}{b-a}\int_{0}^{1}(1-t)\left\vert f^{\prime
}(tx+(1-t)b)\right\vert dt \\
&\leq &\frac{(x-a)^{2}}{b-a}\left( \int_{0}^{1}(1-t)dt\right) ^{1-\frac{1}{q}%
}\left( \int_{0}^{1}(1-t)\left\vert f^{\prime }(tx+(1-t)a)\right\vert
^{q}dt\right) ^{\frac{1}{q}} \\
&&+\frac{(b-x)^{2}}{b-a}\left( \int_{0}^{1}(1-t)dt\right) ^{1-\frac{1}{q}%
}\left( \int_{0}^{1}(1-t)\left\vert f^{\prime }(tx+(1-t)b)\right\vert
^{q}dt\right) ^{\frac{1}{q}}.
\end{eqnarray*}%
Since $\left\vert f^{\prime }\right\vert ^{q}$ is quasi-convex we have%
\begin{equation*}
\int_{0}^{1}(1-t)\left\vert f^{\prime }(tx+(1-t)a)\right\vert ^{q}dt\leq 
\frac{1}{2}\max \{\left\vert f^{\prime }(x)\right\vert ^{q},\left\vert
f^{\prime }(a)\right\vert ^{q}\}
\end{equation*}%
and%
\begin{equation*}
\int_{0}^{1}(1-t)\left\vert f^{\prime }(tx+(1-t)b)\right\vert ^{q}dt\leq 
\frac{1}{2}\max \{\left\vert f^{\prime }(x)\right\vert ^{q},\left\vert
f^{\prime }(b)\right\vert ^{q}\}.
\end{equation*}%
Therefore, we have%
\begin{eqnarray*}
\left\vert \frac{(b-x)f(b)+(x-a)f(a)}{b-a}-\frac{1}{b-a}\int_{a}^{b}f(u)du%
\right\vert &\leq &\frac{(x-a)^{2}}{2\left( b-a\right) }\left( \max \left\{
\left\vert f^{\prime }(x)\right\vert ^{q},\left\vert f^{\prime
}(a)\right\vert ^{q}\right\} \right) ^{\frac{1}{q}} \\
&&+\frac{(b-x)^{2}}{2\left( b-a\right) }\left( \max \left\{ \left\vert
f^{\prime }(x)\right\vert ^{q},\left\vert f^{\prime }(b)\right\vert
^{q}\right\} \right) ^{\frac{1}{q}}.
\end{eqnarray*}
\end{proof}

\begin{remark}
In Theorem 8, if we choose $x=\frac{a+b}{2},$ we obtain (\ref{3}) inequality.
\end{remark}

\end{document}